\documentclass[11pt]{amsart}
\usepackage{amsthm,amsmath,amssymb,mathrsfs,yhmath}
\usepackage{xcolor}
\usepackage{enumerate}
\usepackage{verbatim}
\usepackage[active]{srcltx}  

\newtheorem{thm}{Theorem}

\newtheorem{lem}[thm]{Lemma}

\theoremstyle{definition}

\theoremstyle{remark}

\newtheorem*{rem}{Remark}


\newcommand{\ii}{\mathbf{i} }

\newcommand{\pac}{\mathcal{P}}
\newcommand{\hau}{\mathcal{H}}
\newcommand{\rr}{\mathbb{R}}
\newcommand{\nn}{\mathbb{N}}

\begin{document}
\title{On packing measures and a theorem of Besicovitch}

\author[Ignacio Garcia]{Ignacio Garcia $^\dag$}
\address{Departamento de Matem\'atica, Facultad de Ciencias Exactas y Naturales,
Universidad Nacional de Mar del Plata, Argentina}
\email{nacholma@gmail.com}
\author[Pablo Shmerkin]{Pablo Shmerkin $^\ddag$}
\address{Department of Mathematics, Faculty of Engineering and Physical Sciences,
University of Surrey, Guilford, GU2 7XH, United Kingdom}
\email{p.shmerkin@surrey.ac.uk}
\thanks{$\dag$ Partially supported by CAI+D2009 $\text{N}^\circ 62$-$310$
(Universidad Nacional del Litoral) and $E449$ (UNMDP)}
\thanks{$\ddag$ Partially supported by a Leverhulme Early Career Fellowship and by a Cesar Milstein Grant}
\subjclass[2000]{28A78 (28A80)}
\keywords{Packing measure}

\begin{abstract} Let $\hau^h$ be the $h$-dimensional Hausdorff measure on  $\rr^d$.  Besicovitch showed that if a set $E$ is null for $\hau^h$, then it is null for $\hau^g$, for some dimension $g$ smaller than $h$. We prove that this is not true for packing measures. Moreover, we consider the corresponding questions for sets of non-$\sigma$-finite packing measure, and for pre-packing measure instead of packing measure.
\end{abstract}

\maketitle

\section{Introduction and statements of results}

\subsection{Introduction} We begin by recalling some definitions. A continuous nondecreasing
function $h:(0,\infty)\to(0,\infty)$ that verifies $h(t)\to0$ as
$t\to0$ is called a \emph{dimension function}, and the set of these
functions is denoted by $\mathcal D$.

We will be concerned with packing measures. Recall that, given $\delta>0$, a $\delta$-packing of a subset
$E\subset\rr^d$ is a collection of disjoint open balls centered at
$E$ with diameter less than $\delta$. Given $h\in\mathcal D$, the
\emph{$h$-dimensional packing premeasure} $P_0^h$ is given by
\begin{align*}
P_\delta^h(E)&=\sup\left\{\sum_{i} h(|B_i|): \{B_i\}\ \text{is} \
a\ \delta\text{-packing of } E \right\},\\
P_0^h(E) &= \inf_{\delta>0} P_\delta^h(E) = \lim_{\delta\downarrow 0} P_\delta^h(E),
\end{align*}
where $|B|$ denotes the diameter of the ball $B$. The function $E\to P_0^h(E)$ is monotone and finitely subadditive, but fails to be countably subadditive, even on nice sets. The \emph{ $h$-dimensional packing measure} of $E$ (or $h$-packing measure of $E$) is defined by
\begin{equation*}
\pac^h(E)=\inf\left\{\sum_{j=1}^\infty P_0^h(E_j): E\subset\bigcup_{j=1}^\infty E_j \right\}.
\end{equation*}
It is well known that $\pac^h$ is an outer measure on $\rr^d$ (i.e. a monotone, countably subadditive set function which vanishes on the empty set), and is a measure (countably additive on disjoint collections) on the class of analytic (or Suslin) sets.

For a given dimension function $h$, the packing measure $\pac^h$ is in some sense dual to the Hausdorff measure $\hau^h$ (for whose definition the reader is referred to \cite[Section 2.5]{Fal90}; in this note we do not use Hausdorff measures other than for motivation). Just as Hausdorff measures, packing measures are used to provide fine information on the size of fractal sets. For many random sets, especially related to Brownian motion, packing measures (rather than Hausdorff measures) provide the ``right'' concept to measure the size of the set, see e.g. \cite{Duquesne09} and \cite{MortersShieh09} for two deep recent examples. Packing measures have also been recently applied to the study Cantor sets defined in terms of their gaps \cite{CHM10} and their rearrangements \cite{HMZ11}.

Given $g,h\in\mathcal D$, we say that $g$ is a smaller dimension
than $h$, denoted by $g\prec h$, whenever
\[
\lim_{t\to0}\frac{h(t)}{g(t)}=0.
\]
This relation defines a partial order on $\mathcal D$. A basic property of packing measures states that if $\pac^h|_E$
is a $\sigma$-finite nontrivial measure space, then $\pac^f(E)=0$ for any $f\in\mathcal D$ such
that $h\prec f$, and $\pac^g|_E$ is non-$\sigma$-finite for any
$g\in\mathcal D$ such that $g\prec h$ (here $\pac^h|_E$ is the
restriction of the measure $\pac^h$ to the set $E$).

 It follows from the above that the poset $\{ f\in\mathcal{D}: \pac^f(E)=0\}$ has no maximal elements, and a natural question is whether it may have minimal elements; likewise, one may ask whether $\{ f\in\mathcal{D}:\pac^f|_E \text{ is non-}\sigma\text{-finite} \}$ has maximal elements. In the context of Hausdorff measures, Besicovitch proved the the answer is always negative, at least if the set $E$ is analytic (see also \cite[Theorem 42]{Rog}). More precisely, we have

\begin{thm}[\cite{Besi56b}]\label{besi}
Let $E\subset\rr^d$ and $h\in\mathcal D$.
\begin{enumerate}[(a)]
\item If $\hau^h(E)=0$ then there exists
$g\in\mathcal D$ such that $g\prec h$ and also $\hau^g(E)=0$.
\item If $E$ is analytic and $\hau^h|_E$ is non-$\sigma$-finite, then there exists $f\in\mathcal D$ such that
$h\prec f$ and also $\hau^f|_E$ is non-$\sigma$-finite.
\end{enumerate}
\end{thm}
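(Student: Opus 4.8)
The plan is to treat both parts through one scale-by-scale summability principle, the two being exactly dual; the only extra ingredient in part (b), and the single place where analyticity is used, is the extraction of a quantitative witness of non-$\sigma$-finiteness.

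\textbf{Part (a).} Since $\hau^h(E)=0$, I would first choose, for each $n$, a cover $\{U^n_i\}_i$ of $E$ with $|U^n_i|\le 2^{-n}$ and $\sum_i h(|U^n_i|)\le 4^{-n}$. Grouping each cover by the dyadic scale of its diameters, set
\[
b_j=\sum_{n\ge 0}\ \sum_{i:\,2^{-(j+1)}<|U^n_i|\le 2^{-j}} h(|U^n_i|),\qquad r_j=\sum_{k\ge j}b_k,
\]
so that $\sum_j b_j\le\sum_n 4^{-n}<\infty$. The elementary summability lemma now applies: taking $w_j=\min\{r_j^{-1/2},\,h(2^{-j})^{-1/2}\}$ gives $w_j\to\infty$ while $\sum_j w_j b_j\le\sum_j r_j^{-1/2}(r_j-r_{j+1})\le 2\sqrt{r_0}<\infty$. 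Let $w(t)$ interpolate the $w_j$ continuously and let $g$ be the largest nondecreasing minorant of $h\cdot w$; the cap $w_j\le h(2^{-j})^{-1/2}$ forces $g\le h^{1/2}\to0$, so $g\in\mathcal{D}$, and $h(t)/g(t)=1/w(t)\to0$ yields $g\prec h$. Finally $\sum_i g(|U^n_i|)$ is, up to a constant, the $n$-th tail $\sum_{j\ge n}w_j\cdot(\text{scale-}j\text{ mass of }\{U^n_i\}_i)$, whose sum over $n$ equals $\sum_j w_j b_j<\infty$; hence it tends to $0$, and as these covers have vanishing mesh, $\hau^g(E)=0$.

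\textbf{Part (b).} Here the combinatorics dualize: if $\sum_j b_j=\infty$ and $s_j=\sum_{k\le j}b_k$, then $w_j=s_j^{-1}\to0$ but $\sum_j w_j b_j=\infty$, and $f:=h\cdot w$ satisfies $f(t)/h(t)=w(t)\to0$, i.e. $h\prec f$, with $f$ a \emph{smaller} gauge than $h$ — which is precisely why preserving non-$\sigma$-finiteness is the substantial direction. To produce the $b_j$ and convert them into lower bounds, I would use the density characterisation of non-$\sigma$-finiteness together with analyticity. By the Rogers--Taylor theory, $\hau^h|_E$ non-$\sigma$-finite forces $E$ to carry a nonzero finite Borel measure $\mu$ whose lower $h$-density $\underline D^h_\mu(x)=\liminf_{r\to0}\mu(B(x,r))/h(2r)$ vanishes at $\mu$-almost every $x$; it is the analytic (Suslin) hypothesis, through the subset theorem of Besicovitch--Davies, that guarantees such a witness $\mu$ exists inside $E$. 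The scale-distribution of the density dips of $\mu$ supplies a divergent series $\sum_j b_j=\infty$, and the role of $w_j=s_j^{-1}$ is to let $f$ shrink slowly enough that $\underline D^f_\mu(x)=\liminf_r\mu(B(x,r))/f(2r)$ still vanishes $\mu$-a.e. Once this holds, the Rogers--Taylor inequality $\mu(A)\le C\,\big(\sup_{A}\underline D^f_\mu\big)\,\hau^f(A)$ shows $\mu(A)=0$ for every $A\subseteq E$ with $\hau^f(A)<\infty$; a $\sigma$-finite decomposition of $E$ would then give $\mu(E)=0$, contradicting $\mu(E)>0$, so $\hau^f|_E$ is non-$\sigma$-finite.

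\textbf{Where the work is.} In part (a) the only nuisance is the regularisation making $g=h\cdot w$ a genuine dimension function; the mathematical content is the one-line summability lemma. The real obstacle is the density-preservation step in part (b): one must pass to $f$ with $f/h\to0$ while keeping the \emph{almost-everywhere} vanishing of the lower density, and the rate at which $\mu(B(x,r))/h(2r)$ dips below a given threshold is not uniform in $x$. Choosing a single gauge $f$ that decays slowly enough, in terms of the scale-distribution of these dips, so that $\mu$-a.e. point still sees infinitely many scales with $\mu(B(x,r))\ll f(2r)$, is the crux — and it is exactly here, through the existence and regularity of the witness measure $\mu$, that the analyticity hypothesis cannot be removed.
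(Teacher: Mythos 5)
First, a point of comparison: the paper does not prove this statement at all --- it is quoted as classical background, attributed to Besicovitch \cite{Besi56b} (see also \cite[Theorem 42]{Rog}), and serves only as motivation for the packing-measure results. So your attempt can only be measured against the classical argument, not against anything in the text. For part (a), your proposal is essentially the classical scale-weighting proof (choose covers with $h$-mass $\le 4^{-n}$, weight dyadic scales by $w_j\to\infty$ keeping $\sum_j w_j b_j<\infty$, set $g\approx h\cdot w$), and it is correct in outline; it is also the same mechanism the paper itself uses for the prepacking analogue (Theorem \ref{prepac}(a), with the $\max$-interpolation handling monotonicity and continuity directly). Two of your one-liners need repair, though. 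The identity $h(t)/g(t)=1/w(t)$ is false once you pass to the largest nondecreasing minorant: $g(t)=\inf_{s\ge t}h(s)w(s)$ can be attained at $s\gg t$ where $w$ is small, so $g\le hw$ with no matching lower bound for free. The conclusion $g/h\to\infty$ is still true --- split the infimum at an intermediate scale $u(t)$ with $h(u(t))=\sqrt{h(t)}$, giving $g(t)\ge\min\bigl(h(t)\,w(u(t)),\sqrt{h(t)}\bigr)$ --- but that argument is absent. Similarly, since $w_{j+1}/w_j$ need not be bounded (take $r_{j+1}=r_j^2$, so $r_{j+1}^{-1/2}b_j\approx 1$), a continuous interpolation of the $w_j$ can exceed $w_j$ on the $j$-th dyadic block and your final estimate $\sum_i g(|U^n_i|)\lesssim\sum_j w_j b_j^{(n)}$ breaks; you should keep $w$ constant equal to $w_j$ on each block and regularize only at the end.

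Part (b) contains a genuine gap, and you half-admit it. Your reduction is: produce a finite nonzero Borel measure $\mu$ on $E$ with $\underline D^h_\mu=0$ $\mu$-a.e., then find $f$ with $f/h\to0$ such that $\underline D^f_\mu=0$ still holds $\mu$-a.e. Both halves are asserted rather than proved. The witness lemma is not an off-the-shelf consequence of what you cite: Besicovitch--Davies gives a compact subset of positive finite $\hau^h$-measure inside an analytic set of infinite measure, and Davies-type results give a compact non-$\sigma$-finite subset; upgrading non-$\sigma$-finiteness to the existence of a measure with $\mu$-a.e.\ vanishing lower $h$-density is itself a nontrivial step requiring an argument. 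Worse, the crux you name --- choosing one gauge $f$ that decays slowly enough that $\mu$-a.e.\ point still sees infinitely many dip scales, despite the dip scales being wildly non-uniform in $x$ --- is never carried out: ``the scale-distribution of the density dips supplies a divergent series $\sum_j b_j=\infty$'' comes with no definition of $b_j$ and no mechanism (one would need an Egorov- or Borel--Cantelli-type uniformization over exceptional sets of small $\mu$-measure, and this is where the real work of Besicovitch's proof lives). As it stands, part (b) is a plausible plan with its two load-bearing steps missing. Note also that the classical route, like the paper's own proofs of the packing analogues (Theorem \ref{pacinfty} via Haase's exhaustion, and Theorem \ref{prepac}(b) via a single divergent packing), constructs $f$ directly from a sequence of covers or an exhaustion argument, bypassing densities entirely; that route is more robust precisely because it avoids the non-uniformity you correctly identify as the obstacle.
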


In this note we discuss Besicovitch's Theorem but in the setting of packing measures.

\subsection{Results}
Our first result is that the first part of Theorem \ref{besi} fails for packing measures. In fact, let $\mathcal D_d\subset
\mathcal D$ be the set of the `at most' $d$-dimensional doubling
dimension functions, that is, functions $h\in\mathcal D$ that
verifies
\begin{equation}\label{dimension}
\frac{t^d}{h(t)}< c_0, \ \forall t>0, \quad \text{for some } c_0=c_0(h)
\end{equation}
and
\begin{equation}\label{doubling}
 h(2t)\le c_1h(t), \
\forall t>0, \quad \text{for some } c_1=c_1(h).
\end{equation}
For example,  $\mathcal D_d$ contains the power functions $t^s$ for
$0<s\le d$ as well as $t^s \varphi(t)$ when $\varphi(t)$ is slowly varying.

\begin{thm}\label{paczero} Given $h\in\mathcal D_d$, there is a Borel (in fact, $G_\delta$) set $E\subset\rr^d$ such that
$\mathcal P^h(E)=0$ but $\mathcal P^g|_E$ is non-$\sigma$-finite for
any $g\in\mathcal D$ such that
\begin{equation} \label{eq:liminf_zero}
\liminf_{t\to0}\frac{h(t)}{g(t)}=0.
\end{equation}
In particular, $\mathcal P^g|_E$ is not $\sigma$-finite if $g\prec
h$.
\end{thm}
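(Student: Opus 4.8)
The plan is to realise $E$ as a countable family of homogeneous Cantor (Moran) sets bundled into a single compact set, and to read off both properties from the natural measures on the pieces. I will use two standard facts. First, the packing density theorem: if $\mu$ is a finite Borel measure on $\rr^d$ and the upper density $\overline{D}^h_\mu(x):=\limsup_{r\to0}\mu(B(x,r))/h(2r)$ equals $\infty$ for every $x\in A$, then $\pac^h(A)=0$. Second, a premeasure criterion for non-$\sigma$-finiteness: the premeasure $P_0^g$ is invariant under closures, and $\pac^g|_E$ is $\sigma$-finite iff $E$ is covered by countably many sets of finite $g$-premeasure; hence a Baire category argument shows that $\pac^g|_E$ is non-$\sigma$-finite as soon as $P_0^g(E\cap U)=\infty$ for every open $U$ meeting $E$.

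For a homogeneous Cantor set $C$ with $N_k$ basic cubes of diameter $s_k\downarrow0$ and uniform measure $\mu$, these facts become arithmetic conditions. Since $\mu(B(x,s_k))\asymp N_k^{-1}$, one has $\overline{D}^h_\mu\equiv\infty$ precisely when $\liminf_k N_k h(s_k)=0$, so $\liminf_k N_k h(s_k)=0$ forces $\pac^h(C)=0$. On the other hand $P_0^g(C)\asymp\limsup_k N_k g(s_k)$, and the same holds for every sub-copy of $C$ (rooted at a level-$k_0$ cube the relative counts are $N_k/N_{k_0}$); thus $\limsup_k N_k g(s_k)=\infty$ makes $P_0^g(C\cap U)=\infty$ for all $U$, so $\pac^g|_C$ is non-$\sigma$-finite. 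The whole problem is therefore reduced to choosing counts so that $N_k h(s_k)$ dips to $0$ along some subsequence while $N_k g(s_k)$ blows up along a (possibly different) subsequence, for every admissible $g$.

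For the construction, fix a log-dense scale sequence $s_k\downarrow0$ (i.e. $s_{k+1}/s_k\to1$) and a partition $\nn=\bigsqcup_m A_m$ into infinitely many infinite sets. For each $m$ let $C_m$ be the homogeneous Cantor set on the scales $\{s_k\}$ whose branching is $h$-critical at the scales $k\in A_m$ (so $N_k h(s_k)\asymp1$ there) and as thin as the geometry permits at the scales $k\notin A_m$ (so that $\liminf_k N_k h(s_k)=0$); the hypothesis $h\in\mathcal{D}_d$ is exactly what makes the $h$-critical branching realisable in $\rr^d$. Placing suitably rescaled copies $C_m'$ in disjoint balls shrinking to a point $p$, put $E=\{p\}\cup\bigcup_m C_m'$; then $E$ is compact, hence $G_\delta$, and $\pac^h(E)\le\sum_m\pac^h(C_m')=0$. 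To catch a given $g$ with $\liminf_{t\to0}h(t)/g(t)=0$, note that $\limsup_{r\to0}g(r)/h(r)=\infty$, so there are $r_i\downarrow0$ with $g(r_i)/h(r_i)\to\infty$; approaching each $r_i$ from above by a construction scale and using that $h$ is doubling, one obtains indices $k_i$ with $g(s_{k_i})/h(s_{k_i})\to\infty$. Since $\{k_i\}$ is infinite it meets some $A_m$ infinitely, and along that set $C_m$ is $h$-critical, whence $N_k g(s_k)\asymp g(s_k)/h(s_k)\to\infty$, giving $\limsup_k N_k g(s_k)=\infty$ and non-$\sigma$-finiteness of $\pac^g$ on $C_m'\subset E$.

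The hard part is this last step for oscillating $g$, i.e. when $\liminf h/g=0$ but $\lim h/g\neq0$: then $g$ dominates $h$ only along a sparse set of scales that an adversary can try to hide among the thin scales of any single piece. Two points defeat this. One must use the premeasure ($\limsup$) criterion for non-$\sigma$-finiteness rather than the density ($\liminf$) criterion, since an isolated over-dense (thin) scale does not spoil a $\limsup$; and one must spread the thin scales across the family through the partition $\{A_m\}$, so that no infinite set of ``good'' scales can sit in the thin part of every piece, the pigeonhole step being what closes the gap. Arranging the rescaled thick-scale sets of the family to be jointly rich enough to capture the good scales of an arbitrary $g$, while respecting the geometric feasibility of the branching and the absence of any doubling hypothesis on $g$, is the main technical burden of the argument.
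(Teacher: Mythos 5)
There is a genuine gap, and it sits exactly where you place ``the main technical burden'': the pigeonhole step is false as stated, and the quantifier order makes it unrepairable within your architecture. From ``$\{k_i\}$ is infinite'' you conclude it ``meets some $A_m$ infinitely'', but an infinite subset of $\nn$ can meet every cell of a countable partition in a \emph{finite} set (a transversal). Since you must fix $E$ --- hence the scales, the partition $\{A_m\}$ and all branching profiles --- before $g$ is quantified, an adversarial $g$ can place its good scales so as to escape every piece. This is not just a hypothetical: take $h(t)=t^d$. Then for any Moran construction the total volume $N_ks_k^d$ is essentially nonincreasing in $k$, while $h$-criticality at scale $s_k$ means $N_ks_k^d\asymp1$; so once a piece goes thin it can \emph{never} return to criticality, and every $C_m$ has $N_kh(s_k)\to0$. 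One then picks spike scales $k_1<k_2<\cdots$ so late that $N_{k_i}h(s_{k_i})\le 1/i^2$ for all pieces $m\le i$, and builds a monotone $g\in\mathcal D$ with $g/h\asymp i$ near $s_{k_i}$ and $g\asymp h$ elsewhere: then $\liminf_{t\to0}h(t)/g(t)=0$, yet $\sup_k N_kg(s_k)<\infty$ for every $m$, so each $C_m$ has finite $g$-premeasure and $E$ is $\sigma$-finite for $\mathcal P^g$. Two further defects: your log-dense scales $s_{k+1}/s_k\to1$ force the (integer) branching ratios $N_{k+1}/N_k\le (s_k/s_{k+1})^d\to1$, i.e.\ eventually no branching at all, so ``$h$-critical at the scales $k\in A_m$'' is not realisable on that grid; and your $E$ is compact --- which would answer affirmatively the open question the paper poses precisely because its own $E$ is $G_\delta$ but \emph{not} closed. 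That should have been a warning sign.

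The paper's route is structurally different and shows what your scheme is missing. It builds \emph{one} Cantor set $K$ whose scales are adapted to $h$ (defined by $h(a_n)=2^{-dn}$, constant branching $2^d$), so $K$ is $h$-critical at \emph{every} scale and $0<\mathcal P^h(K)<\infty$; criticality everywhere means every good scale of every admissible $g$ is usable, with no pigeonhole and no feasibility problem (the critical volumes $2^{dn}a_n^d$ are automatically nonincreasing because the scales come from $h$, not from a fixed grid). The set $E$ is then a dense $G_\delta$ subset of $K$ which is null for the natural measure $\mu\asymp\mathcal P^h|_K$, giving $\mathcal P^h(E)=0$; and --- this is the key idea absent from your proposal --- the non-$\sigma$-finiteness of $\mathcal P^g|_E$ is obtained not by a direct packing estimate on $E$ (impossible: $E$ is too small) but by Baire category: $P_0^g(U\cap K)=+\infty$ for every open $U$ meeting $K$, the premeasure is closure-invariant, so any decomposition $E=\bigcup_{ij}E_{ij}$ with $P_0^g(E_{ij})<\infty$ forces each $\overline{E}_{ij}$ to be nowhere dense in $K$, making $K$ the union of two meager sets --- contradicting that $E$ is a dense $G_\delta$ in the compact space $K$. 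Your two standard facts (the density theorem and the premeasure/category criterion) are correct, but the architecture of countably many compact pieces, each critical on a prescribed cell of scales, cannot deliver the theorem.
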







Now we turn to the second part of Besicovitch's Theorem.
Recall that an {\em analytic set} is the continuous image of $\nn^\nn$, where  $\nn^\nn$ is endowed with the product topology.

\begin{thm}\label{pacinfty}
Let $A\subset\rr^d$ be an analytic set of non-$\sigma$-finite $\mathcal P^h$
measure. Then there exists $g\in\mathcal D$ such that $h\prec g$ and $A$ has non-$\sigma$-finite $\mathcal P^g$ measure.
\end{thm}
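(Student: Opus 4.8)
The plan is to reduce the statement to a purely metric, premeasure-theoretic assertion about a single compact set, and then to build the larger dimension function $g$ by a scale-by-scale diagonal construction.

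\emph{Step 1: reduction to a premeasure-thick compact set.} Call a compact $K$ \emph{$h$-thick} if $P_0^h(K\cap V)=\infty$ for every open $V$ with $K\cap V\neq\emptyset$. I would first record the elementary fact that an $h$-thick compact set has non-$\sigma$-finite $\mathcal P^h$ measure: if $K=\bigcup_j E_j$ with $\mathcal P^h(E_j)<\infty$, then, unwinding the definition of $\mathcal P^h$ as an infimum of sums of premeasures, $K$ is covered by countably many sets of finite $P_0^h$-premeasure; replacing each by its closure in $K$ (the premeasure is insensitive to closures, since the centres of a finite packing may be perturbed into the set) and applying the Baire category theorem in the compact space $K$ yields a portion $K\cap V$ of finite premeasure, contradicting thickness. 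Conversely, given that $A$ is analytic with $\mathcal P^h|_A$ non-$\sigma$-finite, I would invoke a subset theorem for packing measures (capacitability, in the spirit of the measurability results of Mattila--Mauldin and Joyce--Preiss) to extract a compact $K_0\subseteq A$ with $\mathcal P^h|_{K_0}$ non-$\sigma$-finite, then remove its $\sigma$-finite part: since $\mathcal P^h$ is countably subadditive, the union $U$ of all open $V$ with $\mathcal P^h(K_0\cap V)$ $\sigma$-finite meets $K_0$ in a $\sigma$-finite set (by Lindel\"of), so $K:=K_0\setminus U$ is compact with $\mathcal P^h(K\cap V)$ non-$\sigma$-finite, hence infinite, on every portion; as $P_0^h\ge\mathcal P^h$, $K$ is $h$-thick. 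This reduction carries the only genuinely external input, and I expect the analytic subset theorem to be the main obstacle.

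\emph{Step 2: the key scale-range observation.} For a scale $\delta>0$ and a set $E$, let $P^h_\delta(E;\eta)$ be the supremum of $\sum_i h(|B_i|)$ over packings of $E$ whose balls all have diameter in $[\eta,\delta)$. Because any packing realizing a large value of $P^h_\delta(E)$ may be truncated to a finite sub-packing, which has a strictly positive minimal diameter, one has $P^h_\delta(E;\eta)\uparrow P^h_\delta(E)$ as $\eta\downarrow 0$. Hence, for the $h$-thick $K$, every portion $K\cap V$, every $\delta$ and every $M$ admit some $\eta\in(0,\delta)$ and a packing of $K\cap V$ with all diameters in $[\eta,\delta)$ and $h$-sum at least $M$. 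Confining the diameters to a bounded range $[\eta,\delta)$ is the crucial device: on such a range the ratio $g/h$ can be kept essentially constant, which is exactly what transfers the infinite $h$-premeasure to $g$ while still forcing $g/h\to 0$.

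\emph{Step 3: diagonal construction of $g$.} Fix a countable base $\{V_n\}$ for $K$ and enumerate tasks $(n_k,M_k)$, $k\ge 1$, so that every pair in $\nn\times\nn$ occurs. I would build strictly decreasing scales $\delta_1>\eta_1>\delta_2>\eta_2>\cdots\to 0$ and set $g(t)=2^{-k}h(t)$ on each $[\eta_k,\delta_k)$, where $\eta_k<\delta_k$ is chosen via Step 2 small enough that $K\cap V_{n_k}$ carries a packing with diameters in $[\eta_k,\delta_k)$ and $h$-sum at least $2^kM_k$; on the gaps $[\delta_{k+1},\eta_k)$ I would interpolate monotonically and continuously (possible since the value prescribed at the smaller endpoint is the smaller one), and extend $g$ above $\delta_1$ arbitrarily. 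Then $g\in\mathcal D$ and $g(t)/h(t)\le 2^{-k}$ for $t\le\delta_k$, so $\lim_{t\to0}g(t)/h(t)=0$, i.e. $h\prec g$. The stage-$k$ packing has $g$-sum at least $2^{-k}\cdot 2^kM_k=M_k$, so for each fixed $n$ the infinitely many stages with $n_k=n$, $\delta_k$ arbitrarily small and $M_k$ unbounded give $P^g_\delta(K\cap V_n)=\infty$ for all $\delta$, whence $P_0^g(K\cap V_n)=\infty$. Thus $K$ is $g$-thick, and the elementary lemma of Step 1, now applied to $g$, shows $\mathcal P^g|_K$, and a fortiori $\mathcal P^g|_A$, is non-$\sigma$-finite. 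The only routine verifications are the closure-invariance of the premeasure and the monotone interpolation in the gaps; the conceptual heart is the scale-range concentration of Step 2 combined with the reduction to thickness in Step 1.
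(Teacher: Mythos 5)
Your proposal is correct, but it is organized genuinely differently from the paper's proof, and the difference is worth spelling out. The external input you flag as the main obstacle in Step 1 is precisely Haase's theorem (\cite{H86}, Theorem 2): an analytic subset of $\rr^d$ of non-$\sigma$-finite $h$-packing measure contains a \emph{compact} subset of non-$\sigma$-finite $h$-packing measure. Your attribution is slightly off: the Joyce--Preiss subset theorem yields compact subsets of \emph{positive finite} packing measure, which would not suffice here (countably many such pieces only account for a $\sigma$-finite part), and Mattila--Mauldin concerns measurability; Haase's statement is exactly what you need, it predates the present theorem, and citing it involves no circularity. Granting it, your remaining steps are sound: the Lindel\"of localization producing an $h$-thick compact $K$, the truncation argument showing every portion of $K$ carries finite packings with diameters confined to a window $[\eta,\delta)$ and arbitrarily large $h$-sum, the diagonalization over a countable base with $g=2^{-k}h$ on the stage-$k$ window and monotone interpolation in the gaps (which correctly gives $g/h\le 2^{-k}$ below $\delta_k$, hence $h\prec g$), and the Baire category lemma converting $g$-thickness of the compact $K$ into non-$\sigma$-finiteness of $\pac^g|_K$ --- this last step is \cite[Lemma 4]{H86}, which the paper also invokes, and your closure-insensitivity of $P_0^g$ is the same fact the paper uses in its proof of Theorem \ref{paczero}. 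The paper, by contrast, does not black-box the subset theorem: it reproves it inline via a tree construction in $\nn^\nn$ (Lemma \ref{lemma-closed} playing the role of your Lindel\"of localization at every node), building the compact set $E=\varphi(K)$, the scale sequence $(a_n)$, the packings with $h$-sums exceeding $4^{n+1}$ and diameters pinned in $(a_{n+2},a_{n+1})$, and the interpolated function $g$ all simultaneously, using a perturbation of centers ($y_j\in E\cap\overline B_{a_{n+2}}(x_{\ii_nj})$) to turn packings of $\varphi(C_{\ii_n})$ into packings of $E\cap U$. What your decoupling buys is modularity and a cleaner second half: once thickness of the final set $K$ itself is secured, your scale-window observation makes the construction of $g$ routine and eliminates the center-perturbation bookkeeping entirely. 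What the paper's intertwined version buys is self-containedness: it derives Haase's theorem along the way rather than assuming it.
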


Finally, for prepacking measures we have the following {\em general} result.

\begin{thm}\label{prepac}
Let $A\subset\rr^d$ and $h\in\mathcal D$.
\begin{enumerate}[(a)]
\item \label{it:prepac-down} If $P_0^h(A)=0$, then exists $g\in\mathcal D$ such that
$g\prec h$ and $P_0^g(A)=0$.
\item \label{it:prepac-up} If $P_0^h(A)=+\infty$, then
exists $g\in\mathcal D$ such that $h\prec g$ and $P_0^g(A)=+\infty$.
\end{enumerate}
\end{thm}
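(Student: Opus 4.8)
The plan is to treat the two parts separately, exploiting the rigidity of the premeasure: unlike $\pac^h$, the quantity $P_0^h$ is governed directly by the behaviour of packings at small scales, so both statements can be handled by a scale-by-scale construction of $g$. Throughout I will use that $P_\delta^h(A)$ is nondecreasing in $\delta$, so that $P_0^h(A)=\lim_{\delta\downarrow 0}P_\delta^h(A)$.

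Part (b) is the cleaner one. Since $P_0^h(A)=+\infty$ and $P_\delta^h(A)$ is monotone in $\delta$, in fact $P_\delta^h(A)=+\infty$ for every $\delta>0$; hence for each $\delta>0$ and each $M$ there is a finite $\delta$-packing $\mathcal B$ of $A$ with $\sum_{B\in\mathcal B}h(|B|)>M$. I would choose inductively scales $\delta_n\downarrow 0$ and finite $\delta_n$-packings $\mathcal B_n$ with $\sum_{B\in\mathcal B_n}h(|B|)>M_n$, where the target $M_n$ is fixed below; write $\eta_n=\min_{B\in\mathcal B_n}|B|>0$ and arrange $\delta_{n+1}\le\eta_n$ so that the scale windows descend to $0$. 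The decisive point is to seek $g$ in the multiplicative form $g=\rho\,h$ with $\rho$ continuous, \emph{nondecreasing}, and $\rho(t)\to 0$ as $t\to 0$: then $g$ is automatically a dimension function (a product of nonnegative nondecreasing functions is nondecreasing) and $g/h=\rho\to 0$ gives $h\prec g$. Prescribing $\rho(\eta_n)=c_n$ with $c_n\downarrow 0$, monotonicity of $\rho$ yields $g(|B|)\ge c_n\,h(|B|)$ for every $B\in\mathcal B_n$, so $\sum_{B\in\mathcal B_n}g(|B|)\ge c_n M_n$. Choosing $M_n>n/c_n$ (possible because the premeasure is infinite at every scale) forces $\sum_{B\in\mathcal B_n}g(|B|)>n$; since each $\mathcal B_n$ is a $\delta$-packing for every $\delta\ge\delta_n$, this makes $P_\delta^g(A)=+\infty$ for all $\delta$, i.e. $P_0^g(A)=+\infty$.

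For part (a) the natural dual strategy is to take $g=\rho\,h$ with $\rho$ \emph{nonincreasing} and $\rho(t)\to\infty$, so that $g\prec h$; the obstacle is that such a product need not be nondecreasing, so $g$ may fail to lie in $\mathcal D$. I would instead build $g$ directly by a scale decomposition. Using $P_0^h(A)=0$, pick $\delta_n\downarrow 0$ with $\epsilon_n:=P_{\delta_n}^h(A)\le 4^{-n}$ and, shrinking $\delta_{n+1}$ further if necessary, also $2^{n+1}h(\delta_{n+1})\le 2^{n}h(\delta_n)$ (both requirements ask only that $\delta_{n+1}$ be small, so there is no conflict). Set $a_n=2^n$ and $\gamma_n=a_n h(\delta_n)$, a nonincreasing null sequence. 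On each $[\delta_{n+1},\delta_n]$ I would define $g$ to be the constant $\gamma_{n+1}$ on a left portion and to equal $a_n h$ on the remaining portion, the switch occurring at the point $\delta_{n+1}'$ with $h(\delta_{n+1}')=(a_{n+1}/a_n)h(\delta_{n+1})$, which exists by the intermediate value theorem. This ``flat-segment'' interpolation makes $g$ continuous and nondecreasing, hence $g\in\mathcal D$, and sandwiches the ratio as $a_n\le g(t)/h(t)\le a_{n+1}$ on $[\delta_{n+1},\delta_n]$; the lower bound $a_n\to\infty$ gives $g\prec h$. For the upper bound I would split any $\delta$-packing with $\delta\le\delta_N$ according to the scale window containing each $|B_i|$: balls with $|B_i|\in(\delta_{n+1},\delta_n]$ satisfy $g(|B_i|)\le a_{n+1}h(|B_i|)$ and form a $\delta_n$-packing, whence their $g$-sum is at most $a_{n+1}\epsilon_n$, and $\sum_{n\ge N}a_{n+1}\epsilon_n\le\sum_{n\ge N}2\cdot 2^{-n}\to 0$. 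Thus $P_\delta^g(A)\to 0$ and $P_0^g(A)=0$.

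The main obstacle is precisely the one isolated in part (a): producing a bona fide dimension function. The scale-decomposition bound on $\sum_i g(|B_i|)$ and the requirement $g\prec h$ would both be immediate for the multiplicative gauge $\rho\,h$, but that gauge clashes with the monotonicity demanded of elements of $\mathcal D$, since we need $g/h\to\infty$ while $g\to 0$ and $h$ is only nondecreasing (and may be locally constant). The flat-segment construction is the device that reconciles ``$g$ grows with $t$'' with ``$g/h$ blows up as $t\to 0$''; once it is in place, both estimates reduce to the summability of $\sum_n a_{n+1}\epsilon_n$, which is arranged freely by thinning the scales $\delta_n$.
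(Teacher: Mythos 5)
Your proposal is correct. For part (a) you have essentially rediscovered the paper's proof: the same scales $\delta_n$ with $P_{\delta_n}^h(A)\le 4^{-n}$ and $2h(\delta_{n+1})\le h(\delta_n)$, and your ``flat-segment'' gauge is exactly the paper's $f(t)=\max\bigl(2^n h(t),\,2^{n+1}h(\delta_{n+1})\bigr)$ for $t\in[\delta_{n+1},\delta_n)$ written piecewise, the switch point $\delta_{n+1}'$ being where the two branches of the max cross; the only difference is a welcome streamlining: your tail estimate $\sum_{n\ge N}a_{n+1}\epsilon_n\le 2^{2-N}$ yields $P_0^g(A)=0$ directly, whereas the paper only records $P_0^f(A)\le 4$ and then interpolates a second function with $f\prec g\prec h$, a step your version shows to be unnecessary. (Two small points to tidy: your assertion that $\gamma_n=2^nh(\delta_n)$ is a null sequence rests on $h(\delta_n)\le P_{\delta_n}^h(A)$, valid for $A\ne\emptyset$ and exactly the inequality the paper uses to check $f(t)\to0$; and you should take the scale windows half-open, $[\delta_{n+1},\delta_n)$, so that the balls in one window genuinely form a $\delta_n$-packing.) For part (b), by contrast, you take a genuinely different route. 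The paper first proves Lemma \ref{propopacinfinite} --- $P_0^h(C)=+\infty$ if and only if some single $1$-packing satisfies $\sum_j h(|B_j|)=+\infty$ --- via a nontrivial iterative construction (at each stage a packing whose last ball still carries infinite premeasure, with positive separation), and then uses the lemma twice: once to extract the divergent packing from $h$, and once to convert $\sum_i g(|B_i|)=+\infty$ back into $P_0^g(A)=+\infty$. You bypass the lemma altogether: since $P_\delta^h(A)=+\infty$ at every scale, you extract a separate finite $\delta_n$-packing $\mathcal B_n$ with large $h$-sum at each scale $\delta_n\downarrow0$ and correct $h$ multiplicatively by a continuous nondecreasing $\rho\to0$ pinned at the minimal diameters $\eta_n$; each $\mathcal B_n$ on its own then witnesses $P_\delta^g(A)=+\infty$ for every $\delta\ge\delta_n$, so no disjointness across scales (hence no analogue of the lemma) is ever needed. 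Do make the induction explicitly non-circular: fix $c_n=2^{-n}$ first, then $M_n>n/c_n$, then $\mathcal B_n$ and $\eta_n$, and set $\delta_{n+1}=\min(\eta_n,\delta_n/2)$, noting $\eta_{n+1}<\delta_{n+1}\le\eta_n$ so the interpolation nodes decrease strictly to $0$. The trade-off between the two arguments: the paper's lemma is of independent interest and reduces (b) to a two-line rescaling once it is available, while your argument is more elementary and self-contained, exchanging the lemma for the freedom to choose a fresh packing at each scale.
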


In Section \ref{sec:proofs} we give the proofs of the results, and in Section \ref{sec:remarks} we conclude with remarks and a question.

\section{Proofs} \label{sec:proofs}

\subsection{Proof of Theorem \ref{paczero}}
In this note, $B_r(x)$ always denote the open ball with center $x$ and radius $r$.
We need a preliminary lemma, which is an immediate consequence of the density theorem for packing measure (see \cite{TT}).

\begin{lem}\label{TT}
Let $A\subset\rr^d$ and $h\in\mathcal D_d$. If there is a finite
Borel measure $\mu$ on $\rr^d$ that verifies
\begin{equation*}
C^{-1} h(r)\le\mu(B_r(x))\le Ch(r), \ \forall r<1, \ \forall x\in A,
\end{equation*}
for some positive and finite constant $C$, then
$0<\pac^h(A)<+\infty$.
\end{lem}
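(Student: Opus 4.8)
The plan is to prove the two inequalities $\pac^h(A)<+\infty$ and $\pac^h(A)>0$ separately. The lower density bound $\mu(B_r(x))\ge C^{-1}h(r)$ is what forces finiteness, while the upper density bound $\mu(B_r(x))\le Ch(r)$ forces positivity; in both directions the doubling property \eqref{doubling} of $h\in\mathcal D_d$ is what lets one pass between the radius and the diameter and absorb enlargement factors. This is the concrete form of the Taylor--Tricot density theorem referred to in the statement.

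For finiteness it suffices to bound the premeasure, since $\pac^h(A)\le P_0^h(A)$. Fix $\delta\in(0,2)$ and let $\{B_i\}=\{B_{r_i}(x_i)\}$ be any $\delta$-packing of $A$; then the balls are pairwise disjoint, $x_i\in A$, and $|B_i|=2r_i<\delta<2$, so $r_i<1$ and the hypotheses apply. The lower bound gives $h(r_i)\le C\mu(B_i)$, and doubling gives $h(|B_i|)=h(2r_i)\le c_1h(r_i)\le c_1C\,\mu(B_i)$. Summing over the disjoint family and using $\sum_i\mu(B_i)\le\mu(\rr^d)<\infty$ yields $\sum_i h(|B_i|)\le c_1C\,\mu(\rr^d)$; taking the supremum over packings and letting $\delta\downarrow0$ gives $P_0^h(A)\le c_1C\,\mu(\rr^d)<+\infty$.

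For positivity I would first prove that $P_0^h(E)\ge (Cc_1^3)^{-1}\mu(E\cap A)$ for every $E\subset\rr^d$. Fix $\delta<2/5$. The balls $\{B_r(x):x\in E\cap A,\ 0<r<\delta/2\}$ cover $E\cap A$, so the elementary $5r$-covering (Vitali) lemma produces a countable disjoint subfamily $\{B_i=B_{r_i}(x_i)\}$ with $E\cap A\subset\bigcup_i 5B_i$, where $5B_i=B_{5r_i}(x_i)$. Applying the upper bound at radius $5r_i<1$ and doubling three times (since $5<2^3$),
\[
\mu(E\cap A)\le\sum_i\mu(5B_i)\le C\sum_i h(5r_i)\le Cc_1^3\sum_i h(2r_i)=Cc_1^3\sum_i h(|B_i|).
\]
As $\{B_i\}$ is a $\delta$-packing of $E$ (disjoint open balls centered in $E\cap A\subset E$, of diameter $<\delta$), the last sum is at most $P_\delta^h(E)$; letting $\delta\downarrow0$ gives the claim. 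Summing over a countable cover $A\subset\bigcup_j E_j$ then yields $\pac^h(A)\ge(Cc_1^3)^{-1}\sum_j\mu(E_j\cap A)\ge(Cc_1^3)^{-1}\mu(A)$, which is strictly positive as soon as $\mu(A)>0$ — the case in the intended application, where $\mu$ is a mass distribution carried by $A$.

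The finiteness half is a one-line disjointness estimate; the positivity half is the crux, and the only genuine ingredient there is the covering lemma that extracts, out of the fine cover of $E\cap A$ by admissible balls, a disjoint subfamily whose fivefold enlargements still cover $E\cap A$ — it is the doubling of $h$ that then converts these enlargements back into a bound by $\sum_i h(|B_i|)$ with no loss in the dimension. The two points to watch are that one must keep the radii below $1$ (hence the enlarged radii below $1$ as well, which is why $\delta$ is taken small) so that the density hypotheses apply, and that positivity genuinely requires $\mu(A)>0$: a finite $\mu$ satisfying the bounds along $A$ need not charge $A$ itself, so this input must come from the construction rather than from the density estimates alone.
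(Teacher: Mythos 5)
Your argument is correct, and it is worth noting that the paper does not actually prove this lemma at all: it is quoted as an immediate consequence of the density theorem for packing measures of Taylor and Tricot \cite{TT}. What you have written is, in effect, a self-contained proof of the two halves of that density theorem in the special case of two-sided bounds: the disjointness estimate gives $P_0^h(A)\le c_1C\mu(\rr^d)$, which dominates $\pac^h(A)$, and the $5r$-covering lemma gives $P_0^h(E)\ge (Cc_1^3)^{-1}\mu(E\cap A)$ for every $E$, which passes to $\pac^h$ on summing over countable covers. The citation buys brevity; your route buys explicit constants and makes transparent that the doubling condition \eqref{doubling} is used only to absorb the enlargement factors (incidentally $c_1^2$ suffices, since $5r_i<2^2\cdot 2r_i$, but this is immaterial), while condition \eqref{dimension} is not needed at all for this lemma. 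All the delicate points check out: the radii stay below $1$ (and below $1$ after fivefold enlargement) so the density hypotheses apply, and the Vitali subfamily is a legitimate $\delta$-packing of $E$ since its balls are disjoint, open, of diameter less than $\delta$, and centered in $E\cap A\subset E$. The one cosmetic gloss is that $E\cap A$ need not be $\mu$-measurable for arbitrary $E$, so $\mu(E\cap A)$ should be read as the induced (Borel-regular) outer measure, which is monotone and countably subadditive — all your argument uses.

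Your closing caveat is not pedantry but a correctly diagnosed imprecision in the statement itself: as literally stated, with no relation between $\mu$ and $A$ beyond the density bounds, the positivity conclusion is false. For instance, in $\rr$ take $h(t)=\sqrt{t}$ for $t\le 1$ (extended suitably so that $h\in\mathcal D_1$) and let $\mu$ have density $h'(t)$ on $[0,1]$; then $\mu(B_r(0))=h(r)$ for all $r<1$, so $A=\{0\}$ satisfies the hypotheses with $C=1$, yet $\pac^h(\{0\})=0$. So the hypothesis $\mu(A)>0$ that your inequality $\pac^h(A)\ge (Cc_1^3)^{-1}\mu(A)$ requires is genuinely necessary, and it is silently supplied in the paper's application, where $\mu$ is the uniform measure on the Cantor set $K$ and $A=K$, so $\mu(A)=1$. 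Modulo stating that extra hypothesis explicitly, your proof is complete and is arguably more informative than the reference the paper relies on.
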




\begin{proof}[Proof of Theorem \ref{paczero}]
It is well known that a dense $G_\delta$ subset of $\rr^d$ always has full packing dimension, even though it may have zero Lebesgue measure and even zero Hausdorff dimension. Our idea is to replace $\rr^d$ by an appropriate set $K$ of positive, finite $h$-dimensional packing measure, construct such a dense $G_\delta$ set of zero $\pac^h$ measure, and show that dense $G_\delta$ sets (relative to $K$) have non-$\sigma$-finite measure for any $g$ satisfying \eqref{eq:liminf_zero}.

\textbf{First step. Construction of the Cantor set $K$}.
Without loss of generality we assume that $c_0=1$, where $c_0$ is the constant in (\ref{dimension}). Let
$(a_n)_{n\ge1}$ be defined by $h(a_n)=2^{-dn}$. From
(\ref{dimension}) and (\ref{doubling}) we have that $2^na_n<1$ and
$2a_{n+1}< a_n$, whence we can construct a $2^d$-corner Cantor set
$K$ in the unit cube as follows. Let $K_0=[0,1]^d$ and define
$K_n=\bigcup_{j=1}^{2^d} Q_j^n$, for $n>0$, as the union of the
$2^{dn}$ {\em basic cubes} $Q_j^n$ of side $a_n$ contained in
$K_{n-1}$ that have a common vertex with a cube of $K_{n-1}$. Then
$K=\bigcap_{n\ge0}K_n$.

Let $\mu$ be the uniform Cantor measure on $K$, which is constructed
by repeated subdivision setting
$$
\mu(Q_j^n)=2^{-dn}, \ 1\le j\le2^{dn}, \ n\ge0.
$$
(See for example \cite[Chapter 1]{Fal90}.) Now fix $r<1$ and $x\in K$. Let $n$
be the greatest integer such that $B_r(x)$ contains a basic cube of
$K_n$ but none of $K_{n-1}$. Then $a_n\le 2r$ and
$r<\sqrt{d}\,a_{n-1}$, whence $$Ch(r)\le h(a_n)\le c_1h(r),$$ where
$C>0$ is independent of $r$. Moreover, $B_r(x)$ intersects at most
$2^{d+1}$ basic cubes of $K_n$. Then, by the definition of $a_n$, we
have
\begin{equation*}
h(a_n)\le \mu(B_r(x))\le 2^{d+1} h(a_n),
\end{equation*}
and $0<\pac^h(K)<+\infty$ by Lemma \ref{TT}. Since $\pac^h$ is invariant under isometries of $\rr^d$, it assigns the same mass to all basic cubes of $K_n$ for all $n\ge 0$. Invoking also the Borel regularity of these measures, it follows that $\mu=(1/\pac^h(K))\pac^h|_K$. In particular, $\mu$ and $\pac^h|_K$ have the same null sets.

\textbf{Second step. Construction of the zero measure, dense $G_\delta$ set $E$}. For $n\ge0$, let $V_n$ be the set of
all the vertices of basic cubes of $K_n$. Let
$$U_k=\bigcup_{n\ge0}\bigcup_{v\in V_n} K\cap B_{a_{2n+k}}(v)\setminus\{v\}.$$
Each $U_k$ is dense in $K$ and open with respect to the relative
topology. We define $E=\bigcap_{k\ge1} U_k$. The Baire category theorem
implies that $E$ is a dense subset of $K$. Moreover,
for each $k\ge1$,
\begin{equation*}
\mu(U_k)\le2^{d+1}\sum_{n\ge0}\sum_{v\in
V_n}h(a_{2n+k})=\frac{2^{2d+1}}{2^d-1}2^{-dk},
\end{equation*}
which implies $\pac^h(E)=0$.

\textbf{Third step. Conclusion of the proof}. Now let $g\in\mathcal D$ satisfy \eqref{eq:liminf_zero}.
Let $(r_k)$ be a sequence decreasing to $0$ such that
$h(r_k)/g(r_k)\to 0$ and let $(n_k)$ be such that $a_{n_k+1}<r_k\le
a_{n_k}$.

We first claim that $P_0^g(U\cap K)=+\infty$ for any open set
$U\subset\rr^d$ with $U\cap K\neq\varnothing$. Indeed, it is enough
to show that $P_0^g(Q\cap K)=+\infty$ for each basic cube $Q$ but,
since $P_0^g$ is invariant under Euclidian isometries, the symmetry
of $K$ implies that $P_0^g(Q\cap K)$ is constant over all basic
cubes in $K_n$ for each $n$ whence, by finite subadditivity of
$P_0^g$, it is enough to show that $P_0^g(K)=+\infty$. Let
$\delta>0$ and pick $k$ large enough that $r_k<\delta$. For each
basic cube, let $v(Q)=\min\{v\in Q\}$, where the minimum is taken
with respect to the lexicographical order (that is, $v(Q)$ is the
{\em down-left vertex} of $Q$). Then $\{ B_{r_k}(v(Q)): Q\text{ is a
basic cube in } K_{n_k-1}\}$ is a $\delta$-packing of $K$, and it
follows that
\begin{equation*}
P_\delta^g(K)\ge 2^{d (n_k-1)} g(r_{n_k}) = \left(2^{d(n_k-1)}h(r_{n_k})\right)\frac{g(r_{n_k})}{h(r_{n_k})}.
\end{equation*}
Since $n_k$ is arbitrarily large, this shows that
$P_\delta^g(K)=+\infty$ for all $\delta>0$, establishing the claim.

Now suppose $E$ has $\sigma$-finite $\pac^g$-measure. Then
$E=\bigcup_i E_i$, where $\pac^g(E_i)<+\infty$ for all $i$. In
particular, each $E_i$ can be written as $E_i =\bigcup E_{ij}$,
where $P_0^g(E_{ij})<+\infty$ for all $i,j$. Since the packing
pre-measure of a set and its closure are equal, we also have
$P_0^g(\overline{E}_{ij})<+\infty$ for all $i,j$. Hence, by the
claim, each $E_{ij}$ is nowhere dense. But this would imply that
$K=(K\setminus E)\cup E$ is the union of two meager sets,
contradicting the Baire category theorem. This contradiction
finishes the proof.
\end{proof}

\subsection{Proof of Theorem  \ref{pacinfty}}
The proof of the theorem uses the ideas of Haase (\cite{H86},
Theorem 2), where it is shown that an analytic set of
non-$\sigma$-finite $h$-packing measure contains a compact subset of
non-$\sigma$-finite $h$-packing measure. We provide full details for the reader's convenience.

We endow $\nn^\nn$ with the metric $\rho$ defined for $\ii=(i_1, i_2, \ldots)$ and $\mathbf{j}=(j_1, j_2, \ldots)$ in $\nn^\nn$ by $\rho(\ii,
\mathbf j)=1/k_0$, where $k_0$ is the smallest integer $k$ such that
$i_k\neq j_k$. This metric induces the product topology on $\nn^\nn$. Given $(i_1, \ldots, i_k)\in\nn^k$, the {\em cylinder in $\nn^\nn$ of level $k$ associated to $(i_1, \ldots, i_k)$} is  the clopen set
$G_{j_1,\ldots,j_k}=\{\ii\in\nn^\nn: i_1=j_1,\ldots, i_k=j_k\}$. For
each $k\ge1$, note that $\nn^\nn$ is the countable (disjoint) union of all the cylinders of level $k$. Also, each such cylinder has diameter $1/k$.

We say that a set $E\subset\rr^d$ has {\em locally non-$\sigma$-finite
$h$-packing measure} if $U\cap E$ has non-$\sigma$-finite
$h$-packing measure for each open set $U$ with $U\cap E\neq\emptyset$. We need a preliminary lemma. Let $\varphi:\nn^\nn\to A$ be a surjective continuous function, which exists because $A$ is analytic.

\begin{lem}\label{lemma-closed}
Given $k>0$ and a closed set $C\subset\nn^\nn$ whose image
$\varphi(C)$ has non-$\sigma$-finite $h$-packing measure, then there
is a closed subset $\tilde C\subset C$ such that $\varphi(\tilde C)$
has locally non-$\sigma$-finite $h$-packing measure and diam$(\tilde
C)\le1/k$.
\end{lem}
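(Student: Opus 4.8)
The plan is to prove Lemma \ref{lemma-closed} by a Baire-category argument carried out inside the closed set $C$, exploiting the fact that non-$\sigma$-finiteness of $\pac^h$ is inherited by at least one piece of any countable decomposition. The key observation is that $C$ is itself a complete metric space (a closed subset of the Polish space $\nn^\nn$), so the Baire category theorem applies to $C$.

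First I would set up a notion of ``badness'' relative to $\varphi$. Call a point $\ii\in C$ \emph{good} if there is a neighbourhood $W$ of $\ii$ (relative to $C$) such that $\varphi(W\cap C)$ has $\sigma$-finite $h$-packing measure; call $\ii$ \emph{bad} otherwise. Let $\tilde C$ be the set of bad points of $C$. The first step is to show that the set of good points covers a set whose image is $\sigma$-finite: since $\nn^\nn$ is second countable, the good points are covered by countably many relatively open sets $W_1, W_2, \ldots$ with each $\varphi(W_i\cap C)$ of $\sigma$-finite measure, and a countable union of $\sigma$-finite sets is $\sigma$-finite. Hence $\varphi(C\setminus\tilde C)$ is $\sigma$-finite. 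Since $\varphi(C)\subset\varphi(\tilde C)\cup\varphi(C\setminus\tilde C)$ and $\varphi(C)$ is non-$\sigma$-finite by hypothesis, it follows that $\varphi(\tilde C)$ is non-$\sigma$-finite; in particular $\tilde C\neq\varnothing$. Moreover $\tilde C$ is closed, being the complement in $C$ of the union of the open sets $W_i$.

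The second step is to verify that $\varphi(\tilde C)$ has \emph{locally} non-$\sigma$-finite measure, not merely non-$\sigma$-finite measure. Here I would argue that at each bad point $\ii\in\tilde C$, every relative neighbourhood $V$ of $\ii$ in $\tilde C$ has $\varphi(V)$ non-$\sigma$-finite: if some $\varphi(V)$ were $\sigma$-finite, one would recover a relatively open set in $C$ whose image is $\sigma$-finite and which still meets a neighbourhood of $\ii$, forcing $\ii$ to be good after all. One must be careful translating between open sets in $\rr^d$ and open sets in $\nn^\nn$ via the continuous surjection $\varphi$; continuity guarantees that the $\varphi$-preimage of an open $U\subset\rr^d$ meeting $\varphi(\tilde C)$ is open in $\nn^\nn$ and meets $\tilde C$, which is exactly what is needed to match the definition of locally non-$\sigma$-finite measure stated before the lemma.

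The final step is the diameter reduction. Since each cylinder $G_{j_1,\ldots,j_k}$ of level $k$ has diameter $1/k$ and $\nn^\nn$ is the countable disjoint union of all level-$k$ cylinders, the set $\tilde C$ is partitioned into the countably many closed pieces $\tilde C\cap G_{j_1,\ldots,j_k}$, each of diameter at most $1/k$. Because $\varphi(\tilde C)$ is non-$\sigma$-finite and non-$\sigma$-finiteness is preserved under at least one term of a countable decomposition, at least one cylinder $G_{j_1,\ldots,j_k}$ has the property that $\varphi(\tilde C\cap G_{j_1,\ldots,j_k})$ is non-$\sigma$-finite. Replacing $\tilde C$ by this intersection gives a closed set of diameter at most $1/k$ whose image is non-$\sigma$-finite; the \emph{local} non-$\sigma$-finiteness of the new set follows from that of $\tilde C$ restricted to a clopen subset, since intersecting with a clopen set does not create new boundary. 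I expect the main obstacle to be the careful bookkeeping of the second step---ensuring that the local property genuinely holds after passing to the subcylinder, and correctly handling the interplay between the topology of $\nn^\nn$ and that of $\rr^d$ through the non-injective map $\varphi$.
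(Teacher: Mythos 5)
Your proof is correct, but it permutes the paper's two steps and runs the Lindel\"{o}f argument on the other side of the map $\varphi$. The paper pigeonholes first: since $C$ is the countable union of its intersections with the level-$k$ cylinders, some $D=C\cap G_{j_1,\ldots,j_k}$ already has non-$\sigma$-finite image and diameter at most $1/k$; it then localizes in the \emph{range}, using Lindel\"{o}f to extract a countable family of open sets $U_n\subset\rr^d$ whose intersections with $\varphi(D)$ are nonempty and $\sigma$-finite, and takes $\tilde C=D\setminus\varphi^{-1}\bigl(\bigcup_n U_n\bigr)$; because the diameter reduction happens before localization, no transfer of the local property is ever needed. You localize first, in the \emph{domain} (good/bad points of $C$, using second countability of $\nn^\nn$), and cut by a cylinder afterwards, which creates exactly the verification burden you flag at the end --- and there your stated justification, that ``intersecting with a clopen set does not create new boundary,'' is the weak link if read in the range: local non-$\sigma$-finiteness of $\varphi(\tilde C)$ as a subset of $\rr^d$ does \emph{not} by itself pass to $\varphi(\tilde C\cap G)$, since $\varphi$ is not injective and an open $U\subset\rr^d$ meeting $\varphi(\tilde C\cap G)$ could carry all its non-$\sigma$-finite mass on $\varphi(\tilde C\setminus G)$. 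What rescues the step is the stronger, domain-side property you established in your second step: every relatively open neighbourhood in $\tilde C$ of every point of $\tilde C$ has non-$\sigma$-finite image. Since the cylinder $G$ is open, this property restricts verbatim to $\tilde C\cap G$, and then for any open $U\subset\rr^d$ containing a point $\varphi(\ii)$ with $\ii\in\tilde C\cap G$, the set $\varphi^{-1}(U)\cap G$ is an open neighbourhood of $\ii$, so $\varphi\bigl(\varphi^{-1}(U)\cap G\cap\tilde C\bigr)$ is non-$\sigma$-finite and is contained in $U\cap\varphi(\tilde C\cap G)$. With that two-line completion spelled out, your argument is sound; the trade-off is that the paper's ordering avoids the transfer issue entirely, while yours produces the canonical maximal bad subset of $C$ before specializing to a cylinder.
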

\begin{proof}
Note that for some $(j_1,\ldots, j_k)\in\nn^k$, the image of the closed set $D=C\cap G_{j_1,\ldots, j_k}$ under $\varphi$ has non-$\sigma$-finite measure. Also $D$ has diameter at most $1/k$.
Let $$\mathcal U=\bigl\{U:  U \text{ open, and } U\cap \varphi(D) \text{ is nonempty and has }
\sigma\text{-finite measure}\bigr\}.$$ By the Lindel\"{o}f property, there is a countable subfamily
$\{U_n\}$ of $\mathcal U$ for which $\bigcup_n
U_n=\bigcup_{U\in\mathcal U}U$. Then $F=\varphi(D)\setminus\bigcup_nU_n$
has locally non-$\sigma$-finite measure. Hence, the set $\tilde C:=\varphi^{-1}(F)\cap D=D\setminus\varphi^{-1}(\bigcup_nU_n)$ verifies
the statement of the lemma.
\end{proof}

\begin{proof}[Proof of Theorem  \ref{pacinfty}]
We begin with some notation. For $\ii=(i_1, i_2,\ldots)\in\nn^\nn$
let $\ii_n=(i_1, \ldots, i_n)$. Also $\ii_nj$ denotes the $(n+1)$-tuple $(i_1, \ldots, i_n, j)$.

Let $C_1\subset \nn^\nn$ be the set obtained applying Lemma
\ref{lemma-closed} to $C=\nn^\nn$ and $k=1$. Set $a_2=1$ and let
$\{B_{r_{1j}}(x_{1j})\}_{j\in \mathcal D}$ be a finite $a_2$-packing
of $\varphi(C_1)$ such that
$$\sum_{j\in \mathcal D}h(2r_{1j})>4^2.$$
Since $\mathcal D$ is finite, there exists $0<a_3<\min_{j\in\mathcal D}
2r_{1j}$ such that if
$y_j\in\overline B_{a_3}(x_{1j})$ for each $j\in\mathcal D$, then
the balls $B_{r_{1j}}(y_{j})$ are disjoint. Next, for $j\in\mathcal
D$, we define $C_{1j}\subset C_1$ applying Lemma \ref{lemma-closed}
to $\varphi^{-1}(\overline B_{a_3}(x_{1j}))\cap C_1$ and $k=2$.

Continuing inductively in this fashion, we construct the following items:
\begin{enumerate}
\item A sequence $(a_n)$ strictly decreasing to $0$.
\item \label{it:finite-level} A subset $\mathcal T\subset\nn^\nn$ such that
$\# \mathcal D(\ii,n)<+\infty$ for each $\ii\in \mathcal T$ and
$n>0$, where $$\mathcal D(\ii, n)=\{j\in\nn : \ii_nj=\mathbf{l} \text{ for  some } \mathbf{l}\in\mathcal T \}.$$
\item \label{it:closed-set}A family of closed subsets $C_{\ii_n}\in\nn^\nn$,
indexed by taking $\ii\in\mathcal T$ and $n>0$, that verifies
\begin{enumerate}
\item $C_{\ii_{n+1}}\subset C_{\ii_n}$;
\item diam$\: C_{\ii_n}\le1/n$;
\item $\varphi(C_{\ii_n})$ has locally non-$\sigma$-finite $h$-packing measure.
\end{enumerate}
\item \label{it:radii} For each $\ii\in\mathcal T$ and $n>0$, a
finite $a_{n+1}$-packing
$\bigl\{B_{r_{\ii_nj}}(x_{\ii_nj})\bigr\}_{j\in\mathcal D(\ii,n)}$
for the set $\varphi(C_{\ii_n})$ such that
\begin{enumerate}
\item \label{it:sum-h-over-level} $$\sum_{j\in\mathcal
D(\ii,n)}h(2r_{\ii_nj})>4^{n+1};$$
\item \label{it:radius} $a_{n+2}< 2r_{\ii_nj}<a_{n+1}$ for all $j\in\mathcal D(\ii,n)$;
\item  given $j\in\mathcal D(\ii,n)$ and $y_j\in\overline B_{a_{n+2}}(x_{\ii_nj})$,
the open balls $B_{r_{\ii_nj}}(y_j)$ are pairwise disjoint;
\item \label{it:contained-in-ball}$\varphi(C_{\ii_n})\subset \overline B_{a_{n+1}}(x_{\ii_n})$.
\end{enumerate}
\end{enumerate}

The set $K=\bigcap_n\bigcup_{\ii\in\mathcal T} C_{\ii_n}$ is compact
because it is closed and totally bounded (by \eqref{it:finite-level}
and \eqref{it:closed-set}). Hence $E=\varphi(K)$ is a compact subset
of $A$.

We define
\begin{equation*}
 g(t)=
        \left\{
  \begin{array}{ccc}
     h(t)/2^{n-\frac{t-a_n}{a_{n-1}-a_n}}, & t\in(a_n, a_{n-1}], \ n>1 \\
        h(t), & t>a_1
  \end{array}
    \right..
\end{equation*}
It is easy to check that $g$ is continuous and $h\prec g$.

Let $U$ be an open set such that $U\cap E\neq\emptyset$. Then there
exist $\ii\in\mathcal T$ and $n_0>0$ such that $\overline
B_{a_{n+1}}(x_{\ii_n})\subset U$ for all $n\ge n_0$, whence $E\cap
\overline B_{a_{n+1}}(x_{\ii_n})\neq \emptyset$ using that $K\cap
C_{\ii_n}\neq\emptyset$ and \eqref{it:contained-in-ball}. In
particular, for each $j\in\mathcal D(\ii, n)$ there exists $y_j\in
E\cap\overline B_{a_{n+2}}(x_{\ii_nj})$. By construction, the balls
$B_{r_{\ii_nj}}(y_j)$ are a packing of $E\cap U$, and by
\eqref{it:sum-h-over-level} and \eqref{it:radius} we have
\begin{equation*}\label{ultima}
P_{a_{n+1}}^g(U\cap E)\ge\sum_{j\in\mathcal
D(\ii,n)}g(2r_{\ii_nj})>2^{n},
\end{equation*}
whence $P_0^g(U\cap E)=+\infty$. By a Baire category argument (see \cite[Lemma 4]{H86}), we
conclude that the compact set $E$ has non-$\sigma$-finite
$g$-packing measure, which implies that the same holds for $A$.
\end{proof}

%

\subsection{Proof of Theorem \ref{prepac}}

\begin{proof}[Proof of Theorem \ref{prepac}\eqref{it:prepac-down}]
It is enough to find $f\in\mathcal D$ such that $f\prec h$ and
$P_0^f(A)<+\infty$, because in this situation any $g\in\mathcal D$
such that $f\prec g\prec h$ satisfies the conclusion of the theorem.

Let $(\delta_n)_{n\ge0}\searrow0$ be a decreasing sequence of
positive reals such that
\begin{enumerate}
\item $P_{\delta_n}^h(A)\le1/{2^{2n}}$, and
\item $2h(\delta_{n+1})<h(\delta_n)$.
\end{enumerate}
If $0<\delta<\delta_0$ and $\mathcal B$ is a $\delta$-packing of
$A$, set
\[\mathcal B_n=\{B\in\mathcal B:\delta_{n+1}\le|B|<\delta_n\}.\]
Then, by $(1)$
\begin{align*}
\sum_{n\ge0}\sum_{B\in\mathcal B_n}2^nh(|B|)\le \sum_{n\ge0}2^n
P_{\delta_n}^h(A)\le\sum_{n\ge0}2^{-n}=2,
\end{align*}
whence we define $f$ by
\begin{equation*}
         f(t)=
        \left\{
  \begin{array}{cc}
      \max\left(2^nh(t), 2^{n+1}h(\delta_{n+1})\right), & t\in[\delta_{n+1}, \delta_n)\\
     h(t), & t\ge\delta_0
  \end{array}
    \right..
\end{equation*}
It follows from $(2)$ that $f(\delta_n)=2^nh(\delta_n)$, hence $f$
is (left) continuous. It is also monotone nondecreasing and
$f(t)\to0$ as $t\to0$ since
\[f(\delta_n)=2^nh(\delta_n)\le 2^nP_{\delta_n}^h(A)\le1/2^n.\]
Moreover, $f\prec h$ because
\begin{align*}
\frac{h(t)}{f(t)}\le\frac{1}{2^n}\quad\text{for } t\in[\delta_{n+1}, \delta_n).
\end{align*}
Finally, for $t\in[\delta_{n+1},
\delta_n)$ we have $f(t)\le 2^{n+1}h(t)$, therefore if $\mathcal B$ is an arbitrary packing and $\mathcal B_n$
is as above, then
\begin{align*}
\sum_{n\ge0}\sum_{B\in\mathcal B_n}
f(|B|)=\sum_{n\ge0}\sum_{B\in\mathcal B_n} 2^{n+1}h(|B|)\le4,
\end{align*}
and the result follows.
\end{proof}

For the proof of Theorem \ref{prepac}\eqref{it:prepac-up} we use the following
lemma.

\begin{lem}\label{propopacinfinite}
Let $C\subset\rr^d$ and $h\in\mathcal D$. Then, $P_0^h(C)=+\infty$
if and only if there is a $1$-packing $\{B_j\}$ of $C$ which
verifies $\sum_j h(|B_j|)=+\infty$.
\end{lem}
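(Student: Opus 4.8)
The plan is to prove the two implications separately. Throughout I will use that $P_\delta^h$ is nondecreasing in $\delta$, so that $P_0^h=\lim_{\delta\downarrow0}P_\delta^h=\inf_{\delta>0}P_\delta^h$; in particular $P_0^h(C)=+\infty$ is equivalent to $P_\delta^h(C)=+\infty$ for \emph{every} $\delta>0$. The reverse implication is the short one, but note that exhibiting one $1$-packing of infinite $h$-content only gives $P_1^h(C)=+\infty$ a priori, so its real content is to propagate the divergence down to all scales $\delta$.

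For the reverse direction, suppose $\{B_j\}$ is a $1$-packing with $\sum_j h(|B_j|)=+\infty$, and fix $\delta\in(0,1)$; I want $P_\delta^h(C)=+\infty$. I would split into two cases according to how many balls are large. If infinitely many $B_j=B_{r_j}(x_j)$ satisfy $|B_j|\ge \delta/2$ (so $r_j\ge\delta/4$), then replacing each such $B_j$ by the concentric ball $B_{\delta/4}(x_j)\subseteq B_j$ yields a $\delta$-packing of $C$ whose $h$-content is $\sum h(\delta/2)=+\infty$. Otherwise all but finitely many $B_j$ satisfy $|B_j|<\delta/2$; these form a $(\delta/2)$-packing, and discarding the finitely many large balls removes only a finite amount of $h$-content, so $P_\delta^h(C)\ge P_{\delta/2}^h(C)=+\infty$. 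As $\delta$ is arbitrary, $P_0^h(C)=+\infty$.

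For the forward direction I would build the desired packing as a disjoint union $\bigcup_n\mathcal P_n$ of finite packings, where each $\mathcal P_n$ has balls of diameter less than $\delta_n$ (with $\delta_n\downarrow 0$), is disjoint from $\mathcal P_1\cup\cdots\cup\mathcal P_{n-1}$, and has $h$-content at least $1$; summing over $n$ then forces $\sum h(|B|)=+\infty$. The engine is the following inductive step: given $S\subseteq\rr^d$ with $P_0^h(S)=+\infty$ and $\delta>0$, one finds a finite packing $\mathcal P$ of $S$ with balls of diameter less than $\delta$ and $h$-content at least $1$, together with a subset $T\subseteq S$ disjoint from $\overline{\bigcup_{B\in\mathcal P}B}$ with $P_0^h(T)=+\infty$. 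Applying this repeatedly to $S=C$ and then to the successive remainders produces the $\mathcal P_n$; disjointness is automatic since each $\mathcal P_n$ lies in the complement of the previously chosen closures, and the survival of a set $T$ of infinite prepacking (hence, by monotonicity, infinite $P_0^h$ of the remainder) is exactly what keeps the induction going.

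The \textbf{main obstacle} is precisely this inductive step, and it is delicate because $P_0^h$ is only finitely, not countably, subadditive. The naive attempt---grab any finite packing of content $\ge 1$ and hope the remainder is still infinite---can fail, since covering the region that carries the infinite prepacking destroys all the mass: for $C=\{1/n:n\ge 1\}$ with $h(t)=t^{1/2}$ one has $P_0^h(C)=+\infty$, yet deleting any neighborhood of $0$ drops $P_0^h$ to $0$. Consequently one cannot localize to a single piece of infinite prepacking, and one must instead choose $\mathcal P$ adaptively: I would locate (using finite subadditivity) a closed ball $\overline{B_\rho(z)}$ on which $S$ still admits a $\delta$-packing of content $\ge 1$, while keeping $\rho$ small enough that a separated part $T=S\setminus\overline{B_{\rho'}(z)}$ (with $\rho'>\rho$) retains infinite $P_0^h$; the content $\ge 1$ is then harvested from \emph{many} small balls at scale $\delta$ inside $B_\rho(z)$, spread across scales rather than taken at a single scale, as the example $C=\{1/n\}$ already forces. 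Verifying that such a ball always exists---equivalently, that one can always peel off a bounded, small-ball packing of unit content while leaving behind a region of infinite prepacking---is the crux of the argument.
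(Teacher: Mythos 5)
Your reverse implication is correct and complete: you rightly observe that one $1$-packing of infinite content only gives $P_1^h(C)=+\infty$ a priori, and your two-case argument (shrinking infinitely many large balls to concentric balls of diameter $\delta/2$, versus discarding the finitely many large balls) propagates the divergence to every scale $\delta$; this is in fact slightly more robust than the paper's version, which first reduces to bounded $C$. The forward implication, however, is only a strategy, not a proof: you yourself flag the inductive step---peel off a finite packing of unit content at scale $\delta_n$ while a remainder of infinite prepacking survives at positive distance---and end with ``verifying that such a ball always exists \ldots\ is the crux of the argument.'' Since finite (not countable) subadditivity of $P_0^h$ is the whole difficulty here, and you correctly identified (via the example $C=\{1/n\}$, $h(t)=t^{1/2}$) that the naive peeling fails, leaving this step unverified means the forward direction is not established. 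In particular, in your formulation the harvesting ball $\overline{B_\rho(z)}$ must avoid the ``infinity point'' $p$ when the set $\{x:\ P_0^h(S\cap B_\varepsilon(x))=+\infty\ \forall\varepsilon\}$ is a single point (as in your own example), so you would still need to show that at each fixed scale $\delta$ unit content can be extracted at positive distance from $p$---true, e.g.\ by discarding the at most one ball of a packing containing $p$ and pigeonholing over dyadic annuli around $p$, but none of this is in your writeup.

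For comparison, the paper resolves exactly this crux by inverting the roles of reservoir and harvest: at each stage it chooses a $1$-packing $\{B_i\}_{i=1}^{N}$ of the current set whose \emph{last ball} $B_N$ itself satisfies $P_0^h(C\cap B_N)=+\infty$, with the other balls at positive distance from $B_N$ (achieved by slightly shrinking radii, using left continuity of $h$), and with total content $>2h(1)$. It keeps $B_1,\ldots,B_{N-1}$ and recurses \emph{inside} $B_N$; since the discarded reservoir ball costs at most $h(1)$, each stage nets content $>h(1)$, and the positive separation guarantees the balls produced inside $B_N$ at later stages can be made disjoint from all earlier ones, yielding a single $1$-packing of infinite content. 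So your overall architecture (iterated extraction plus a surviving region of infinite prepacking) matches the paper's, but where the paper reserves a ball \emph{of the packing} as the region of recursion, you reserve the complement of a ball, and the existence claim your version hinges on is asserted rather than proved. Supplying that localization argument (or adopting the paper's reserved-ball device) is what is needed to close the gap.
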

\begin{proof}
We assume that $C$ is bounded. Otherwise the equivalence is trivial
from the definition of $P_0^h$.

Let $\delta>0$ and assume that there exists a $1$-packing as in the
statement. Then, there exists $J>0$ such that $|B_j|<\delta$ for all
$j\ge J$, since $C$ is bounded. Hence $\{B_j\}_{j\ge J}$ is a
$\delta$-packing and $\sum_{j\ge J}h(|B_i|)=+\infty$. Then,
$P_\delta^h(C)=+\infty$ for all $\delta>0$ and therefore,
$P_0^h(C)=+\infty$.

Now suppose $P_0^h(C)=+\infty$. The packing is constructed as
follows. We begin with a $1$-packing $\{B_i^1\}_{i=1}^{N_1}$ of $C$
that satisfies
\begin{itemize}
\item $\sum_{i=1}^{N_1} h(|B_i^1|)>2h(1)$;
\item $P_0^h(C\cap B_{N_1}^1)=+\infty$;
\item dist$(B_i^1, B_{N_1}^1)>0$, \ \ $1\le i<N_1$ (this holds because $h$ is left continuous).
\end{itemize}
As a consequence of these conditions, we can select a $1$-packing
$\{B_i^2\}_{i=1}^{N_2}$ of $C\cap B_{N_1}^1$ such that
\begin{itemize}
\item $\sum_{i=1}^{N_2} h(|B_i^2|)>2h(1)$;
\item $P_0^h(C\cap B_{N_2}^2)=+\infty$;
\item dist$(B_i^2, B_{N_2}^2)>0$ and dist$(B_j^1, B_{N_2}^2)>0$, \ \ $1\le i<N_2$, $1\le j< N_1$;
\item $B_i^2\cap B_j^1=\emptyset$, for all $1\le i\le N_2$, $1\le j< N_1$.
\end{itemize}
Then, $\{B_1^1,\ldots,B_{N_1-1}^1,B_1^2,\ldots,B_{N_2}^2\}$ is a
$1$-packing of $C$ of size
\begin{equation*}
\sum_{i=1}^{N_1-1}h(|B_i^1|)+\sum_{i=1}^{N_2}h(|B_i^2|)>3h(1).
\end{equation*}
Continuing with this procedure \emph{ad infinitum} we obtain the
desired packing.
\end{proof}

\begin{proof}[Proof of Theorem \ref{prepac}\eqref{it:prepac-up}]
By Lemma \ref{propopacinfinite}, there is a $1$-packing
$\{B_i\}$ of $A$ such that $$\sum_i h(|B_i|)=+\infty.$$ Let $N_1,
N_2, \ldots$ be an increasing sequence of integers such that
\begin{equation*}
\sum_{i\le N_j}h(|B_i|)>4^j \quad \text{for  all } j\ge1.
\end{equation*}
Let $t_j\searrow0$ be a sequence of reals such that
$t_j<\min\bigl\{|B_1|,\ldots, |B_{N_j}|\bigr\}$ for all $j\ge1$. We
define $g:[0,+\infty)\to[0,+\infty)$ by
\begin{equation*}
         g(t)=
        \left\{
  \begin{array}{ccc}
     2^{\frac{t-t_j}{t_{j-1}-t_j}-j}h(t), & t\in(t_j, t_{j-1}], \ j>1 \\
          h(t), & t>t_1
  \end{array}
    \right..
\end{equation*}
It is easily seen that $g\in\mathcal D$. Also, observe that for any
$t\in(t_j,t_{j-1}]$,
\begin{equation*}
\frac{g(t)}{h(t)}\le\frac{1}{2^{j-1}},
\end{equation*}
whence $h\prec g$.

Now let $j>1$. Note that if $t_j<t$ for some $j$, then $t\in(t_k,
t_{k-1}]$ for some $k\le j$, and hence
\begin{equation*}\label{eq}
g(t)\ge \frac{h(t)}{2^k}\ge\frac{h(t)}{2^j}.
\end{equation*}
(If $k=1$ then $t\in(t_1,+\infty)$.) Therefore, we obtain
\begin{align*}
\sum_i g(|B_i|)&\ge\sum_{i:t_j<|B_i|}g(|B_i|) \\
               &\ge\sum_{i:t_j<|B_i|}\frac{h(|B_i|)}{2^j} \\
               &\ge\frac{1}{2^j}\sum_{i\le N_j}h(|B_i|) \\
               & > 2^j,
\end{align*}
hence $\{B_i\}$ is a $1$-packing of $A$ such that $\sum_i
g(|B_i|)=+\infty$, and the theorem follows by Proposition
\ref{propopacinfinite}.
\end{proof}

\section{Remarks and a question} \label{sec:remarks}

We finish the article with some remarks and an open question.

\begin{rem}
In Theorem \ref{paczero}, we showed that for a large class of dimension fuctions $h$, there exist sets $E$ with the property that the poset $\{ g\in \mathcal D: \pac^g(E)=0\}$ has $h$ as a minimal element. We remark that, for general sets $E$, the poset may have no minimal elements. For example, if $K$ is the Cantor set constructed in the proof of Theorem \ref{paczero} (for the given dimension function $h$), it is not hard to check that $\pac^g(K)=0$ if and only if $h\prec g$, and this class clearly has no minimal elements (see \cite{CHM10} for the proof of the equivalence when $d=1$).
\end{rem}

\begin{rem}
Theorem \ref{pacinfty} can be generalized to a complete separable metric space but considering the radius-based definition of packing measure instead of the diameter-based  definition (of course in $\rr^d$ both definitions agree, but not in general metric spaces). Indeed, the method of Haase that we adapt works in that general setting.
\end{rem}

In Theorem \ref{paczero}, the sets we construct are $G_\delta$ but not closed. This suggests the following question:

{\bf Open question.} Does there exist a closed set $E\subset\rr^d$ such that the poset $\{ h\in\mathcal D: \pac^h(E)=0 \}$ has a minimal element? If so, what are the possible minimal elements?


\end{document}